\documentclass[11pt]{article}

\usepackage[a4paper,margin=1in]{geometry}
\usepackage{amsmath,amssymb,amsthm}
\usepackage{mathtools}
\usepackage{booktabs}
\usepackage{comment}
\usepackage{xcolor}
\usepackage{todonotes}
\usepackage[hidelinks]{hyperref}

\hypersetup{
  pdftitle={Proof of conjecture of Ran and Teng on spectra of \(4 \times 4\) tridiagonal stochastic matrices nonnegativity of the second largest eigenvalue of \(4 \times 4\) tridiagonal stochastic matrices},
  pdfauthor={Brando Vagenende, Brecht Verbeken, Marie-Anne Guerry, and Andres Algaba}
}

\newtheorem{theorem}{Theorem}[section]
\newtheorem{lemma}[theorem]{Lemma}
\newtheorem{conjecture}[theorem]{Conjecture}

\newenvironment{keywords}
{\par\medskip\noindent\textbf{Keywords.}\ }
{\par\medskip}

\newenvironment{AMS}
{\par\medskip\noindent\textbf{AMS subject classifications.}\ }
{\par\medskip}

\title{Nonnegativity of the second largest eigenvalue of \(4 \times 4\) tridiagonal stochastic matrices}
\author{Brando Vagenende, Brecht Verbeken, Marie-Anne Guerry \footnote{Department Business Technology and Operations, Data Analytics Laboratory, Vrije Universiteit Brussel (VUB), Pleinlaan 2, Brussels, 1050, Belgium (brando.vagenende@vub.be,
brecht.verbeken@vub.be, marie-anne.guerry@vub.be).}}
\date{}

\begin{document}

\maketitle

\begin{abstract}
The spectral study of nonnegative and more specifically stochastic matrices is an important topic in matrix theory. In this paper, we prove a conjecture, formulated by Ran and Teng \cite{ran2024nonnegative}, which states that the second largest eigenvalue of an irreducible $4\times4$ tridiagonal stochastic matrix is nonnegative. We establish this conjecture and extend the result to arbitrary $4\times4$ tridiagonal stochastic matrices, including both irreducible and reducible cases.
\end{abstract}

\begin{keywords}
Nonnegative matrices, stochastic matrices, tridiagonal stochastic matrices, eigenvalue region. 
\end{keywords}

\begin{AMS}
15A18, 15B51
\end{AMS}

\section{Introduction}
Recent research concerns the investigation of the eigenvalues of stochastic matrices and their subsets. The tridiagonal stochastic matrices form one of these stochastic subsets.

Ran and Teng~\cite{ran2024nonnegative} studied spectral regions for row-stochastic matrices with prescribed zero patterns. The $3\times3$ tridiagonal stochastic matrices, for $\alpha,\beta,\gamma, \delta \in[0,1]$ with $\beta + \gamma \leq 1$, 
\[
\begin{pmatrix}
\alpha & 1-\alpha & 0 \\
\beta & \gamma & 1-\beta-\gamma \\
0 & 1-\delta & \delta\\
\end{pmatrix}
\]
have the same spectral region as the set of stochastic matrices with two zeros
\[
\begin{pmatrix}
\alpha & 0 & 1-\alpha\\
0 & \delta & 1-\delta\\
\beta & 1-\beta-\gamma & \gamma\\
\end{pmatrix}.
\]
of which the spectral region is fully determined in (\cite{ran2024nonnegative}, Proposition 17).

Further, for \(4 \times 4\) tridiagonal stochastic matrices
\begin{equation}\label{eq:A}
A=
\begin{pmatrix}
\alpha & 1-\alpha & 0 & 0\\
\beta & \gamma & 1-\beta-\gamma & 0\\
0 & \delta & \phi & 1-\delta-\phi\\
0 & 0 & 1-\kappa & \kappa
\end{pmatrix}
\end{equation}
where \(0 \le \alpha,\beta,\gamma,\delta,\phi,\kappa \le 1\), \(\beta+\gamma \le 1\), and \(\delta+\phi \le 1\), Ran and Teng stated in \cite{ran2024nonnegative} the following conjecture, knowing that irreducible tridiagonal matrices have only real eigenvalues.

\begin{conjecture}\label{conjecture}
Let \(A\) be an irreducible \(4\times 4\) tridiagonal stochastic matrix of form (\ref{eq:A}), and let its eigenvalues be ordered as
\(
\lambda_1(A) = 1 \ge \lambda_2(A) \ge \lambda_3(A) \ge \lambda_4(A).
\)
Then the second largest eigenvalue satisfies \(\lambda_2(A) \ge 0\).
\end{conjecture}

In Section \ref{IrreducibleCase}, we prove this conjecture. After this, in Section \ref{ReducibleCase}, we extend this result to all \(4 \times 4\) tridiagonal stochastic matrices of the form (\ref{eq:A}), including irreducible as well as reducible tridiagonal stochastic matrices.

\section{Irreducible case}\label{IrreducibleCase}
In this section we prove Conjecture \ref{conjecture}, which is stated for $4\times4$ tridiagonal stochastic matrices that are irreducible. We first characterize irreducibility:

\begin{lemma}\label{lemma:irreducible}
The tridiagonal stochastic matrix \(A\) in (\ref{eq:A}) is irreducible if and only if its super- and subdiagonal matrix entries are stricly positive, i.e.
\(
1 - \alpha > 0,
1 - \beta - \gamma > 0,
1 - \delta - \phi > 0,
\beta > 0,
\delta > 0 \text{ and }
1 - \kappa > 0.
\)
\end{lemma}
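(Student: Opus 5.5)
We will use the standard graph-theoretic characterization of irreducibility. A nonnegative \(n\times n\) matrix is irreducible if and only if its associated directed graph \(G(A)\) is strongly connected. This graph has vertex set \(\{1,2,3,4\}\) and an arc \(i\to j\) exactly when \(a_{ij}>0\). Diagonal entries only contribute loops, which are irrelevant for strong connectivity. Since \(A\) is tridiagonal, the only possible non-loop arcs are \(i\to i+1\), coming from the superdiagonal entries \(1-\alpha\), \(1-\beta-\gamma\), \(1-\delta-\phi\). The other possible arcs are \(i+1\to i\), coming from the subdiagonal entries \(\beta\), \(\delta\), \(1-\kappa\).

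For sufficiency, assume all six off-diagonal entries are strictly positive. Then \(G(A)\) contains the path \(1\to2\to3\to4\) and the path \(4\to3\to2\to1\). Hence any vertex \(i\) reaches any vertex \(j\): move forward along the first path if \(i<j\), and backward along the second if \(i>j\). So \(G(A)\) is strongly connected and \(A\) is irreducible.

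For necessity, we argue by contraposition. First suppose some superdiagonal entry \(a_{k,k+1}\) vanishes, for some \(k\in\{1,2,3\}\). Then the only arc that could leave \(\{1,\dots,k\}\) toward larger indices is absent, because the tridiagonal pattern allows no arc \(i\to j\) with \(j>i+1\). So no vertex in \(\{k+1,\dots,4\}\) is reachable from vertex \(1\), and \(A\) is reducible. Equivalently, \(A\) has the block upper triangular form \(\begin{pmatrix} A_{11} & 0\\ A_{21} & A_{22}\end{pmatrix}\) with respect to the partition \(\{1,\dots,k\}\cup\{k+1,\dots,4\}\), which exhibits reducibility directly. Symmetrically, if some subdiagonal entry \(a_{k+1,k}\) vanishes, then vertex \(1\) (indeed all of \(\{1,\dots,k\}\)) is unreachable from vertex \(4\), and \(A\) is again reducible.

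There is no real obstacle in this argument. The only point requiring care is the observation that in a tridiagonal pattern each cut between \(\{1,\dots,k\}\) and \(\{k+1,\dots,4\}\) is crossed by exactly one arc in each direction. Once this is stated, both implications follow immediately, and the six positivity conditions listed in the statement are exactly the positivity of the six off-diagonal tridiagonal entries.
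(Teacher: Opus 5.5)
Your proposal is correct and follows the same route as the paper, which simply asserts that the digraph of the tridiagonal matrix is strongly connected exactly when the super- and subdiagonal entries are positive. You supply the details the paper omits: each cut between $\{1,\dots,k\}$ and $\{k+1,\dots,4\}$ can only be crossed by one arc in each direction. One cosmetic slip is that the displayed form $\begin{pmatrix} A_{11} & 0\\ A_{21} & A_{22}\end{pmatrix}$ is block \emph{lower} triangular, not upper; it still exhibits reducibility once the two blocks are reordered.
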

\begin{proof}
The statement follows from the fact that the graph associated with the tridiagonal matrix $A$ is strongly connected if and only if its super- and subdiagonal entries are stricly positive. Alternatively, one can easily verify that $I + A + A^2 + A^3 > 0$ iff 1 - $\alpha > 0, 1 - \beta - \gamma > 0, 1 - \delta - \phi > 0, \beta > 0, \delta > 0 \text{ and } 1 - \kappa > 0$.
\end{proof}

The assumption that \(A\) of the form (\ref{eq:A}) is irreducible, allows to construct the following similar symmetric matrix \(S\).
\begin{lemma}
    An irreducible tridiagonal stochastic matrix \(A\) of the form (\ref{eq:A}) is similar to the symmetric matrix
\[
S=
\begin{pmatrix}
\alpha & r_1 & 0 & 0\\
r_1 & \gamma & r_2 & 0\\
0 & r_2 & \phi & r_3\\
0 & 0 & r_3 & \kappa
\end{pmatrix},
\]
where \(r_1=\sqrt{(1-\alpha) \beta},
r_2=\sqrt{(1-\beta-\gamma) \delta} \text{ and }
r_3=\sqrt{(1-\delta-\phi)(1-\kappa)}\).
\end{lemma}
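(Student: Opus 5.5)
The plan is to exhibit an explicit diagonal similarity. By Lemma \ref{lemma:irreducible}, irreducibility means every off-diagonal entry of \(A\) on the super- and subdiagonal is strictly positive. In particular the products \((1-\alpha)\beta\), \((1-\beta-\gamma)\delta\) and \((1-\delta-\phi)(1-\kappa)\) are positive, so \(r_1,r_2,r_3\) are well-defined positive real numbers. We look for an invertible diagonal matrix \(D=\operatorname{diag}(d_1,d_2,d_3,d_4)\) with \(d_i>0\) such that \(S=DAD^{-1}\).

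The key computation is the effect of conjugating a tridiagonal matrix by a diagonal matrix. The diagonal of \(DAD^{-1}\) coincides with that of \(A\), namely \(\alpha,\gamma,\phi,\kappa\), and the zero pattern is preserved. The off-diagonal entries transform as \((DAD^{-1})_{i,i+1}=\frac{d_i}{d_{i+1}}a_{i,i+1}\) and \((DAD^{-1})_{i+1,i}=\frac{d_{i+1}}{d_i}a_{i+1,i}\). Setting \(t_i=d_{i+1}/d_i>0\), symmetry requires \(a_{i,i+1}/t_i=t_i\,a_{i+1,i}\), that is \(t_i=\sqrt{a_{i,i+1}/a_{i+1,i}}\). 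Both entries are positive by Lemma \ref{lemma:irreducible}, so \(t_i\) is well defined and positive.

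The common value of the two symmetric entries is then \(\sqrt{a_{i,i+1}a_{i+1,i}}\). This gives exactly \(r_1=\sqrt{(1-\alpha)\beta}\), \(r_2=\sqrt{(1-\beta-\gamma)\delta}\) and \(r_3=\sqrt{(1-\delta-\phi)(1-\kappa)}\). Concretely, I would take
\[
d_1=1,\qquad d_{i+1}=d_i\sqrt{\frac{a_{i,i+1}}{a_{i+1,i}}}\quad (i=1,2,3).
\]
The remaining step is to verify entrywise that \(DAD^{-1}=S\), which is a direct check of the formulas above.

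There is no real obstacle here; the only delicate point is the positivity needed to divide by and take square roots of the off-diagonal entries. This is exactly what Lemma \ref{lemma:irreducible} guarantees, and it is the reason the irreducibility hypothesis is required.
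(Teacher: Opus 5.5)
Your proposal is correct: conjugating by $D=\mathrm{diag}(d_1,\dots,d_4)$ with $d_{i+1}/d_i=\sqrt{a_{i,i+1}/a_{i+1,i}}$ symmetrizes $A$, giving off-diagonal entries $\sqrt{a_{i,i+1}a_{i+1,i}}$, which are exactly $r_1,r_2,r_3$, and Lemma~\ref{lemma:irreducible} supplies the positivity needed for the quotients and square roots. The paper states this lemma without proof and relies implicitly on this same diagonal-similarity symmetrization, so your argument matches the intended one and supplies the details the paper omits.
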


Consequently, \(A\) and \(S\) have the same (real) spectrum.

For \(k=1,2,3,4\), let \(S_k\) denote the leading \(k\times k\) principal submatrix of \(S\), that is, the submatrix obtained by restricting \(S\) to its first \(k\) rows and first \(k\) columns. Define \(\Delta_k := \det(S_k)\) with \(\Delta_0 := 1\).
\begin{lemma}\label{lem:inertia}
Assume $\Delta_k\neq 0$ for $k=1,2,3,4$. Then the number of negative eigenvalues of $S$
equals the number of sign changes in the sequence \((\Delta_0,\Delta_1,\Delta_2,\Delta_3,\Delta_4).\)
\end{lemma}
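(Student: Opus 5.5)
The plan is to use Sylvester's law of inertia together with an $LDL^T$ (congruence) factorization of $S$. Since $\Delta_k\neq 0$ for $k=1,2,3$, Gaussian elimination on $S$ proceeds without pivoting. It produces a unit lower triangular matrix $L$ and a diagonal matrix $D=\mathrm{diag}(d_1,d_2,d_3,d_4)$ with $S=LDL^T$.

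The first step is to identify the pivots. They are $d_k=\Delta_k/\Delta_{k-1}$ for $k=1,\dots,4$. This follows because the leading principal submatrix satisfies $S_k=L_kD_kL_k^T$, where $L_k$ and $D_k$ are the leading $k\times k$ blocks, and $\det L_k=1$. Hence $\Delta_k=d_1\cdots d_k$.

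For our tridiagonal $S$ this can be made completely explicit, which avoids quoting general elimination theory. The recursion is
\[
d_1=\alpha,\quad d_2=\gamma-\frac{r_1^2}{d_1},\quad d_3=\phi-\frac{r_2^2}{d_2},\quad d_4=\kappa-\frac{r_3^2}{d_3}.
\]
The standard three-term recurrence
\[
\Delta_k = s_{kk}\Delta_{k-1}-r_{k-1}^2\Delta_{k-2}
\]
for determinants of tridiagonal matrices then confirms $d_k=\Delta_k/\Delta_{k-1}$ by induction, with the divisions justified by $\Delta_{k-1}\neq 0$.

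The second step applies Sylvester's law of inertia. Since $L$ is invertible, $S$ is congruent to $D$, so $S$ and $D$ have the same number of negative eigenvalues. The number of negative eigenvalues of $D$ is the number of indices $k$ with $d_k<0$.

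Finally, $d_k=\Delta_k/\Delta_{k-1}<0$ exactly when $\Delta_{k-1}$ and $\Delta_k$ have opposite signs, i.e.\ when there is a sign change between consecutive terms of $(\Delta_0,\dots,\Delta_4)$. Counting over $k=1,\dots,4$ gives the claim. The assumption $\Delta_4\neq 0$ also ensures $d_4\neq 0$, so no eigenvalue is zero and the count is unambiguous.

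The only step needing care is justifying the pivot formula and the nonvanishing of each pivot. The explicit tridiagonal recurrence handles this directly, so I expect no real obstacle; this is essentially Jacobi's classical signature rule.
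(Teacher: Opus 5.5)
Your proof is correct and follows the same route as the paper: reduce $S$ by Gaussian elimination (a congruence) to a diagonal matrix whose entries have the signs of $\Delta_k/\Delta_{k-1}$, then apply Sylvester's law of inertia. Your explicit pivot recursion via the tridiagonal three-term recurrence replaces the paper's citation of Gantmacher. Your pivots $d_k=\Delta_k/\Delta_{k-1}$ are the actual elimination pivots; the paper lists the reciprocals $\Delta_{k-1}/\Delta_k$, which have the same signs, so the conclusion is unaffected.
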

\begin{proof}
Since $\Delta_k\neq 0$ for $k=1,2,3,4$, the symmetric matrix $S$ can be transformed through Gauss elimination into the diagonal matrix $D=\text{diag}(\frac{\Delta_0}{\Delta_1},\frac{\Delta_1}{\Delta_2},\frac{\Delta_2}{\Delta_3},\frac{\Delta_3}{\Delta_4})$ (\cite{gantmakher2000theory}, Chapter X). Moreover, The Law of Inertia guarantees that the matrices $S$ and $D$ have the same number of negative eigenvalues. For $D$, an eigenvalue $\frac{\Delta_i}{\Delta_{i+1}}$ is negative iff $\Delta_i$ and $\Delta_{i+1}$ have different signs. The result follows.
\end{proof}
A straightforward computation yields
\begin{align*}
\Delta_1 &= \alpha \ge 0,\\
\Delta_2 &= \alpha\gamma - r_1^2,\\
\Delta_3 &= \phi\Delta_2 - r_2^2\Delta_1,\\
\Delta_4 &= \kappa\Delta_3 - r_3^2\Delta_2,
\end{align*}
which gives the following lemma.

\begin{lemma}\label{lem:imp}
The following statements hold:
\begin{enumerate}
\item If \(\Delta_2 \le 0\), then \(\Delta_3 \le 0\).
\item If \(\Delta_3 \le 0\) and \(\Delta_2 \ge 0\), then \(\Delta_4 \le 0\).
\end{enumerate}
\end{lemma}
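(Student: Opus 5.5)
The plan is to read both implications directly off the three-term recurrences displayed just before the lemma. The only inputs needed are the signs of the coefficients that appear in them. The entries $\alpha,\gamma,\phi,\kappa$ lie in $[0,1]$, so they are nonnegative. The quantities $r_1^2,r_2^2,r_3^2$ are squares of real numbers (products of nonnegative entries), so they are nonnegative as well. In particular $\Delta_1=\alpha\ge 0$.

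\emph{Statement 1.} Assume $\Delta_2\le 0$. In the formula
\[
\Delta_3=\phi\Delta_2-r_2^2\Delta_1
\]
the first term is a nonnegative number times a nonpositive number, so $\phi\Delta_2\le 0$. The second term is $-r_2^2\alpha$, which is minus a product of nonnegative numbers, so it is also $\le 0$. Adding these gives $\Delta_3\le 0$.

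\emph{Statement 2.} Assume $\Delta_3\le 0$ and $\Delta_2\ge 0$. In the formula
\[
\Delta_4=\kappa\Delta_3-r_3^2\Delta_2
\]
the first term satisfies $\kappa\Delta_3\le 0$ because $\kappa\ge 0$ and $\Delta_3\le 0$. The second term satisfies $-r_3^2\Delta_2\le 0$ because $r_3^2\ge 0$ and $\Delta_2\ge 0$. Hence $\Delta_4\le 0$.

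I expect no real obstacle. The only point that needs care is confirming the recurrences themselves, which come from cofactor expansion of the tridiagonal determinant along the last row. This is the standard continuant identity $\Delta_k=s_{kk}\Delta_{k-1}-s_{k,k-1}^2\Delta_{k-2}$. The argument also uses only the nonnegativity of the diagonal entries and of the $r_i^2$, so it does not depend on irreducibility, apart from irreducibility being what makes $S$ well defined.
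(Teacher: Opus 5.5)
Your proposal is correct and is essentially the paper's own argument: both read the signs term by term off the recurrences $\Delta_3=\phi\Delta_2-r_2^2\Delta_1$ and $\Delta_4=\kappa\Delta_3-r_3^2\Delta_2$, using only $\phi,\kappa,\Delta_1=\alpha,r_i^2\ge 0$. In part 2 you are slightly more explicit than the paper, since you state that $-r_3^2\Delta_2\le 0$ relies on the hypothesis $\Delta_2\ge 0$.
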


\begin{proof}
(1) Since \(\phi \ge 0\) and \(\Delta_1 \ge 0\), we have
\[
\Delta_3 = \phi\Delta_2 - r_2^2\Delta_1 \le \phi\Delta_2 \le 0.
\]

\noindent
(2) Since \(\kappa \ge 0\), it follows that
\[
\Delta_4 = \kappa\Delta_3 - r_3^2\Delta_2 \le 0.
\]
\end{proof}

We now prove Conjecture \ref{conjecture}, although the result is obtained under an extra assumption.

\begin{lemma}\label{prop:atmost2neg_generic}
Assume \(\Delta_k \neq 0\) for \(k=1,2,3,4\).
Then \(S\) (and hence \(A\)) has at most two negative eigenvalues.
\end{lemma}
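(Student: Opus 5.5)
By Lemma \ref{lem:inertia}, under the assumption \(\Delta_k\neq 0\) for \(k=1,\dots,4\), it suffices to show that the sequence \((\Delta_0,\Delta_1,\Delta_2,\Delta_3,\Delta_4)\) has at most two sign changes. Since all \(\Delta_k\) are nonzero, the inequalities in Lemma \ref{lem:imp} become strict: if \(\Delta_2<0\) then \(\Delta_3<0\), and if \(\Delta_3<0\) and \(\Delta_2>0\) then \(\Delta_4<0\). Also \(\Delta_0=1>0\) and \(\Delta_1=\alpha\ge 0\), so \(\Delta_1\neq 0\) forces \(\Delta_1>0\). Hence there is no sign change between \(\Delta_0\) and \(\Delta_1\). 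The only possible sign changes are between the consecutive pairs \((\Delta_1,\Delta_2)\), \((\Delta_2,\Delta_3)\) and \((\Delta_3,\Delta_4)\). We must exclude the case where all three occur.

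Three changes would require the sign pattern \(\Delta_1>0\), \(\Delta_2<0\), \(\Delta_3>0\), \(\Delta_4<0\). But \(\Delta_2<0\) forces \(\Delta_3<0\) by part (1) of Lemma \ref{lem:imp}, so this pattern is impossible. Therefore the sequence has at most two sign changes, and \(S\) has at most two negative eigenvalues.

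For completeness, the case split on the sign of \(\Delta_2\) shows what actually happens. If \(\Delta_2<0\), then \(\Delta_3<0\), so the signs are \(+,+,-,-,\pm\), giving at most two changes. If \(\Delta_2>0\), then either \(\Delta_3>0\), giving at most one change (at the last step), or \(\Delta_3<0\), which forces \(\Delta_4<0\) by part (2) and gives exactly one change. So in fact two negative eigenvalues can only occur when \(\Delta_2<0\) and \(\Delta_4>0\).

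Finally, since \(A\) is similar to \(S\), the two matrices share the same real spectrum, and the conclusion transfers to \(A\). No step is a real obstacle here. The only point needing care is that the nonvanishing hypothesis turns the non-strict inequalities of Lemma \ref{lem:imp} into strict sign information.
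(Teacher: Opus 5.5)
Your proof is correct and follows the paper's argument. Lemma~\ref{lem:inertia} reduces the claim to counting sign changes. Lemma~\ref{lem:imp}, made strict by the nonvanishing hypothesis, then excludes the only three-change pattern $+,+,-,+,-$, which is what the paper's case split on the sign of $\Delta_2$ does; your explicit remark that $\Delta_1=\alpha\neq 0$ forces $\Delta_1>0$ fills in a step the paper leaves implicit.
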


\begin{proof}
We consider the possible sign patterns of the sequence
\[
(\Delta_0,\Delta_1,\Delta_2,\Delta_3,\Delta_4),
\qquad \Delta_0=1>0,\ \Delta_1=\alpha\ge 0.
\]

If \(\Delta_2<0\), then Lemma~\ref{lem:imp} implies that \(\Delta_3<0\).
In this case, sign changes can only occur at \(\Delta_1\to\Delta_2\) and possibly at \(\Delta_3\to\Delta_4\), yielding at most two sign changes.

If \(\Delta_2>0\), then either \(\Delta_3<0\) or \(\Delta_3>0\).
If \(\Delta_3<0\), Lemma~\ref{lem:imp} gives \(\Delta_4<0\), so there is exactly one sign change.
If \(\Delta_3>0\), any sign change can only occur at \(\Delta_3\to\Delta_4\), and hence there is at most one sign change.

In all cases, the sequence \((\Delta_0,\Delta_1,\Delta_2,\Delta_3,\Delta_4)\) has at most two sign changes.
By Lemma~\ref{lem:inertia}, it follows that \(S\) has at most two negative eigenvalues.
\end{proof}

The following lemma allows us to remove the assumption \(\Delta_k \neq 0\) by a limiting argument.

\begin{lemma}\label{benadering}
Let $A$ be an irreducible tridiagonal stochastic $4 \times 4$ matrix.
Then there exists a sequence $A^{(n)} \to A$ such that each $A^{(n)}$
is irreducible, tridiagonal and stochastic, and such that for the corresponding
symmetrised matrices $S^{(n)}$ one has
\[
\Delta_k(S^{(n)}) \neq 0, \qquad k = 1,2,3,4.
\]
\end{lemma}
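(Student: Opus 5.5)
The plan is to exploit the fact that the irreducible tridiagonal stochastic matrices of the form (\ref{eq:A}) form a nonempty open subset of the parameter region, while the conditions \(\Delta_k = 0\) are polynomial equations in the parameters that do not vanish identically. Concretely, view the parameter vector \(p=(\alpha,\beta,\gamma,\delta,\phi,\kappa)\in\mathbb{R}^6\). By Lemma~\ref{lemma:irreducible}, irreducibility is equivalent to the strict inequalities \(\alpha<1\), \(\beta>0\), \(\beta+\gamma<1\), \(\delta>0\), \(\delta+\phi<1\), \(\kappa<1\). Together with the nonnegativity constraints \(\alpha,\gamma,\phi,\kappa\ge 0\), these define a convex set \(U\). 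Every point of \(U\) is a limit of points of its interior \(U^\circ\), where all of \(\alpha,\gamma,\phi,\kappa\) are also strictly positive. This follows by convexity: move slightly toward an interior point such as \(p_0=(1/2,1/4,1/4,1/4,1/4,1/2)\).

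Next, I would observe that \(r_1^2=(1-\alpha)\beta\), \(r_2^2=(1-\beta-\gamma)\delta\), and \(r_3^2=(1-\delta-\phi)(1-\kappa)\) are polynomials in \(p\). By the recursions displayed before Lemma~\ref{lem:imp}, each \(\Delta_k\) is then a polynomial \(P_k(p)\). The key step is to show that no \(P_k\) is the zero polynomial. It suffices to exhibit one parameter point where all four are nonzero, and it need not even lie in \(U\). The simplest choice is \(\beta=\delta=0\) and \(\kappa=0\) with generic diagonal: then \(r_1=r_2=0\), so \(\Delta_1=\alpha\), \(\Delta_2=\alpha\gamma\), and \(\Delta_3=\alpha\gamma\phi\). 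With \(\kappa\) general, \(\Delta_4=\kappa\alpha\gamma\phi-r_3^2\alpha\gamma\), which is nonzero for, say, \(\alpha=\gamma=\phi=1/2\), \(\delta=0\), \(\kappa=1/2\), where \(r_3^2=1/4\) and \(\Delta_4=\tfrac18(\tfrac12-\tfrac12)\)\dots{} Since this particular choice gives zero, I would pick \(\kappa=3/4\) instead, giving \(\Delta_4=\tfrac14(\tfrac18-\tfrac1{16})\neq0\). Hence the product \(P=P_1P_2P_3P_4\) is a nonzero polynomial.

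Finally, the zero set of a nonzero polynomial on \(\mathbb{R}^6\) has empty interior (a polynomial vanishing on an open set is identically zero). So for any \(p\in U\) and any \(\varepsilon>0\), the ball \(B(p,\varepsilon)\cap U^\circ\) is a nonempty open set. It therefore contains a point \(p^{(n)}\) with \(P(p^{(n)})\neq0\). Taking \(\varepsilon=1/n\) yields matrices \(A^{(n)}\to A\) that are irreducible, tridiagonal and stochastic, with \(\Delta_k(S^{(n)})\neq0\) for all \(k\). The only step requiring care is the verification that each \(\Delta_k\) is a nonzero polynomial, i.e.\ choosing the witness point correctly. Everything else is the standard argument that complements of proper algebraic sets are dense.
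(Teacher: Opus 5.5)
Your proof is correct, but it takes a different route from the paper's.

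\textbf{How the paper does it.} The paper works one coordinate at a time and uses only one-variable algebra. It keeps $\beta$ and $\delta$ fixed and first makes $\alpha^{(n)}>0$. It then uses that $\Delta_2$ and $\Delta_3$ are affine in $\gamma$ and $\phi$, with leading coefficients $\Delta_1\neq 0$ and $\Delta_2\neq 0$. For $\Delta_4$, where $\kappa$ also enters $r_3^2$, it reads off non-vanishing from $F_{4,n}(0)=-(1-\delta-\phi^{(n)})\Delta_2\neq 0$. So at each stage at most one value is forbidden in a small interval around the current parameter.

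\textbf{How you do it.} You treat $P=\Delta_1\Delta_2\Delta_3\Delta_4$ as one polynomial in all six parameters. You certify it is not identically zero with a single witness point, which may legitimately lie outside the admissible region. You then combine two facts: a nonzero polynomial cannot vanish on a nonempty open set, and the convex irreducible region lies in the closure of its open interior.

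\textbf{Comparison.} The paper's argument is explicit and fully elementary, and it exploits the recursive structure of the tridiagonal minors. It also shows that perturbing only the diagonal entries suffices. Yours is shorter and non-constructive, and it is more robust. It applies unchanged to any finite family of polynomial nondegeneracy conditions, and to larger tridiagonal matrices once a witness is exhibited. It also does not depend on the order in which parameters are perturbed.

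\textbf{A small arithmetic slip.} At your witness $(\alpha,\beta,\gamma,\delta,\phi,\kappa)=(\tfrac12,0,\tfrac12,0,\tfrac12,\tfrac34)$ you have $r_2^2=0$, so $\Delta_3=\phi\Delta_2$ and $r_3^2=\tfrac18$. Hence $\Delta_4=\Delta_2(\kappa\phi-r_3^2)=\tfrac14(\tfrac38-\tfrac18)=\tfrac1{16}$, not $\tfrac14(\tfrac18-\tfrac1{16})$. The conclusion $\Delta_4\neq 0$ is unaffected. Your first choice $\kappa=0$ would also have worked, since then $\Delta_4=-r_3^2\Delta_2=-\tfrac12\cdot\tfrac14=-\tfrac18$. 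The self-correction in the write-up could simply be removed.
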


\begin{proof}
Write
\[
A = A(\alpha,\beta,\gamma,\delta,\phi,\kappa)
=
\begin{pmatrix}
\alpha & 1-\alpha & 0 & 0 \\
\beta & \gamma & 1-\beta-\gamma & 0 \\
0 & \delta & \phi & 1-\delta-\phi \\
0 & 0 & 1-\kappa & \kappa
\end{pmatrix}.
\]
Since $A$ is stochastic and irreducible, its parameters satisfy
\[
0 \leq \alpha < 1, \qquad
\beta > 0, \qquad
0 \leq \gamma < 1-\beta,
\]
\[
\delta > 0, \qquad
0 \leq \phi < 1-\delta, \qquad
0 \leq \kappa < 1.
\]

For $n \geq 1$, set
\[
\varepsilon_n = \frac{1}{n+1}.
\]
We shall construct
\[
v^{(n)}
=
(\alpha^{(n)},\beta,\gamma^{(n)},\delta,\phi^{(n)},\kappa^{(n)})
\]
with
\[
v^{(n)} \to (\alpha,\beta,\gamma,\delta,\phi,\kappa),
\]
while keeping all parameters inside the admissible irreducible stochastic
region.

First choose
\[
\alpha^{(n)}
=
\begin{cases}
\alpha, & \alpha > 0,\\
\varepsilon_n, & \alpha = 0.
\end{cases}
\]
Then
\[
0 < \alpha^{(n)} < 1,
\]
so
\[
\Delta_1(S^{(n)}) = \alpha^{(n)} \neq 0.
\]

Next, with $\alpha^{(n)}$ fixed, consider $\Delta_2$ as a function of the
second diagonal parameter:
\[
F_{2,n}(x)
=
\alpha^{(n)}x - (1-\alpha^{(n)})\beta.
\]
This is a nonconstant affine function because $\alpha^{(n)} > 0$. Hence it has
exactly one zero. The interval
\[
I_{\gamma,n}
=
(0,1-\beta) \cap (\gamma-\varepsilon_n,\gamma+\varepsilon_n)
\]
contains infinitely many points. Therefore we may choose
\[
\gamma^{(n)} \in I_{\gamma,n}
\]
such that
\[
F_{2,n}(\gamma^{(n)}) \neq 0.
\]
Then
\[
\Delta_2(S^{(n)}) \neq 0.
\]
Moreover,
\[
|\gamma^{(n)}-\gamma| < \varepsilon_n,
\]
and the inequalities
\[
\gamma^{(n)} > 0,
\qquad
\beta+\gamma^{(n)} < 1
\]
ensure that stochasticity and irreducibility are preserved in the second row.

Now fix $\alpha^{(n)}$ and $\gamma^{(n)}$. We have already arranged
\[
\Delta_1(S^{(n)}) \neq 0,
\qquad
\Delta_2(S^{(n)}) \neq 0.
\]
Since
\[
r_{2,n}^2 = (1-\beta-\gamma^{(n)})\delta > 0,
\]
the third leading principal minor, as a function of the third diagonal
parameter, is
\[
F_{3,n}(x)
=
\Delta_2(S^{(n)})x
-
r_{2,n}^2\Delta_1(S^{(n)}).
\]
This is again a nonconstant affine function because
\(
\Delta_2(S^{(n)}) \neq 0.
\)
Hence it has exactly one zero. The interval
\[
I_{\phi,n}
=
(0,1-\delta) \cap (\phi-\varepsilon_n,\phi+\varepsilon_n)
\]
contains infinitely many points. Choose
\[
\phi^{(n)} \in I_{\phi,n}
\]
such that
\[
F_{3,n}(\phi^{(n)}) \neq 0.
\]
Then
\[
\Delta_3(S^{(n)}) \neq 0.
\]
Also
\[
|\phi^{(n)}-\phi| < \varepsilon_n,
\qquad
\phi^{(n)} > 0,
\qquad
\delta+\phi^{(n)} < 1.
\]

Finally fix $\alpha^{(n)}$, $\gamma^{(n)}$, $\phi^{(n)}$. Put
\[
r_{3,n}^2(x)
=
(1-\delta-\phi^{(n)})(1-x).
\]
For the fourth leading principal minor, as a function of the final diagonal
parameter $x$, we get
\[
F_{4,n}(x)
=
x\Delta_3(S^{(n)})
-
(1-\delta-\phi^{(n)})(1-x)\Delta_2(S^{(n)}).
\]
This is an affine function of $x$. It is not identically zero, because
\[
F_{4,n}(0)
=
-(1-\delta-\phi^{(n)})\Delta_2(S^{(n)}) \neq 0.
\]
Hence $F_{4,n}$ has at most one zero. The interval
\[
I_{\kappa,n}
=
(0,1) \cap (\kappa-\varepsilon_n,\kappa+\varepsilon_n)
\]
contains infinitely many points. Therefore choose
\[
\kappa^{(n)} \in I_{\kappa,n}
\]
such that
\[
F_{4,n}(\kappa^{(n)}) \neq 0.
\]
Then
\[
\Delta_4(S^{(n)}) \neq 0.
\]

Define
\[
A^{(n)}
=
A(\alpha^{(n)},\beta,\gamma^{(n)},\delta,\phi^{(n)},\kappa^{(n)}).
\]
By construction,
\[
0 < \alpha^{(n)} < 1, \qquad
\beta > 0, \qquad
0 < \gamma^{(n)} < 1-\beta,
\]
\[
\delta > 0, \qquad
0 < \phi^{(n)} < 1-\delta, \qquad
0 < \kappa^{(n)} < 1.
\]
Thus each $A^{(n)}$ is stochastic, tridiagonal and irreducible. Furthermore,
\[
|\alpha^{(n)}-\alpha| \leq \varepsilon_n,
\qquad
|\gamma^{(n)}-\gamma| < \varepsilon_n,
\]
\[
|\phi^{(n)}-\phi| < \varepsilon_n,
\qquad
|\kappa^{(n)}-\kappa| < \varepsilon_n,
\]
while $\beta$ and $\delta$ are unchanged. Hence
\[
(\alpha^{(n)},\beta,\gamma^{(n)},\delta,\phi^{(n)},\kappa^{(n)})
\to
(\alpha,\beta,\gamma,\delta,\phi,\kappa).
\]
Since the entries of $A(v)$ depend continuously on the parameter vector $v$, it
follows that
\[
A^{(n)} \to A
\]
entrywise. By the choices above,
\[
\Delta_k(S^{(n)}) \neq 0,
\qquad k=1,2,3,4.
\]
This proves the lemma.
\end{proof}

\begin{theorem}\label{prop:atmost2neg}
Let $A$ be a $4\times 4$ tridiagonal stochastic matrix. Assume that \(A\) is irreducible. Then $A$ has at most two negative eigenvalues.
\end{theorem}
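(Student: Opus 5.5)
The plan is a limiting argument that combines Lemma~\ref{prop:atmost2neg_generic} with the approximation of Lemma~\ref{benadering}, using continuity of eigenvalues.

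Let $A$ be irreducible and assume, for contradiction, that $A$ has at least three negative eigenvalues. Since $A$ is irreducible and tridiagonal, it is similar to the symmetric matrix $S$, so its spectrum is real. Order it as $1=\lambda_1(A)\ge\lambda_2(A)\ge\lambda_3(A)\ge\lambda_4(A)$. The Perron root $1$ is not negative, so the assumption forces $\lambda_2(A)<0$.

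By Lemma~\ref{benadering}, choose irreducible tridiagonal stochastic matrices $A^{(n)}\to A$ whose symmetrised matrices satisfy $\Delta_k(S^{(n)})\neq 0$ for $k=1,\dots,4$. By Lemma~\ref{prop:atmost2neg_generic}, each $A^{(n)}$ has at most two negative eigenvalues. Its eigenvalues are real, and one of them is $1$, so three eigenvalues are nonnegative and the other is $1$; in particular $\lambda_2(A^{(n)})\ge 0$ for every $n$.

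The only delicate step is passing this to the limit. Eigenvalues depend continuously on the matrix entries, since they are the roots of the characteristic polynomial, whose coefficients are polynomial in the entries. All spectra here are real, so the ordered eigenvalue vector $(\lambda_1,\dots,\lambda_4)$ depends continuously on the matrix. More cleanly, one can work with the symmetric matrices directly: $S^{(n)}\to S$ entrywise, because $r_1,r_2,r_3$ are continuous functions of the parameters. Weyl's inequality $|\lambda_k(S^{(n)})-\lambda_k(S)|\le\|S^{(n)}-S\|_2$ then gives $\lambda_2(S^{(n)})\to\lambda_2(S)$ with no ordering subtleties. Hence $\lambda_2(A)=\lim_n\lambda_2(A^{(n)})\ge 0$, contradicting $\lambda_2(A)<0$.

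Therefore at most $\lambda_3$ and $\lambda_4$ can be negative, which proves the theorem and, as a byproduct, Conjecture~\ref{conjecture}. The limit step is the expected obstacle, and it is handled by Weyl's inequality applied to $S^{(n)}\to S$.
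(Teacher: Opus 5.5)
Your argument is correct and follows the paper's route: approximate $A$ by the generic irreducible matrices of Lemma~\ref{benadering}, apply Lemma~\ref{prop:atmost2neg_generic} to each $S^{(n)}$, and pass to the limit using continuity of the ordered eigenvalues as $S^{(n)}\to S$. Your Weyl inequality just makes the paper's continuity claim explicit, and you correctly track $\lambda_2$, whereas the paper's written proof uses $\lambda_3(S)<0$, which only guarantees two negative eigenvalues and so gives no contradiction with Lemma~\ref{prop:atmost2neg_generic}; one small slip on your side is that ``three eigenvalues are nonnegative'' should read ``at least two eigenvalues are nonnegative,'' though $\lambda_2(A^{(n)})\ge 0$ follows either way.
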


\begin{proof}
Let $S^{(n)}$ be as in Lemma \ref{benadering}.
By Lemma \ref{prop:atmost2neg_generic}, each $S^{(n)}$ has at most two
negative eigenvalues.

Assume for contradiction that $S$ has at least three negative eigenvalues.
Since $S$ is symmetric, we may order its eigenvalues
$\lambda_1(S)\ge\cdots\ge \lambda_4(S)$.
The assumption implies $\lambda_3(S)<0$.
Hence there exists $\eta>0$ such that $\lambda_3(S)\le -\eta$.

Eigenvalues of symmetric matrices depend continuously on the entries.
Therefore, for all sufficiently large $n$ we have
$\lambda_3(S^{(n)})<0$, so $S^{(n)}$ would have at least three
negative eigenvalues, which results in a contradiction.
Thus $S$ has at most two negative eigenvalues. Because $A$ and $S$ share the same eigenvalues, the proposition follows.
\end{proof} 
Hence, Conjecture \ref{conjecture} holds and the nonnegativity of the second largest eigenvalue for irreducible $4\times 4$ tridiagonal stochastic matrices is proved.

\section{General case}\label{ReducibleCase}
In this section we remove the irreducibility assumption and prove Conjecture~\ref{conjecture} for arbitrary $4\times 4$ tridiagonal stochastic matrices. The reducible case is obtained by an approximation argument: we perturb a given reducible matrix within the class of tridiagonal stochastic matrices to obtain a sequence of irreducible matrices, apply the result of the previous section to this sequence, and then pass to the limit using continuity of the eigenvalues. This yields the desired nonnegativity of the second largest eigenvalue in full generality.

\begin{lemma}\label{lem:irreducible_approx}
For any parameters \((\alpha, \beta, \gamma, \delta, \phi, \kappa)\) satisfying the conditions for (\ref{eq:A}) and any $\varepsilon\in(0,1)$,
define
\begin{align*}
\alpha_\varepsilon &= (1-\varepsilon)\alpha + \varepsilon/2,\\
\beta_\varepsilon  &= (1-\varepsilon)\beta  + \varepsilon/2,\qquad
\gamma_\varepsilon = (1-\varepsilon)\gamma,\\
\delta_\varepsilon &= (1-\varepsilon)\delta + \varepsilon/2,\qquad
\phi_\varepsilon   = (1-\varepsilon)\phi,\\
\kappa_\varepsilon &= (1-\varepsilon)\kappa + \varepsilon/2.
\end{align*}
Let $A_\varepsilon$ be built from these parameters as in (\ref{eq:A}).
Then:
\begin{enumerate}
\item $A_\varepsilon$ has nonnegative entries and row sums $1$, i.e. $A_\varepsilon$ is stochastic.
\item $A_\varepsilon$ is irreducible.
\item $A_\varepsilon\to A(\alpha, \beta, \gamma, \delta, \phi, \kappa)$ entrywise as $\varepsilon\downarrow 0$.
\end{enumerate}
\end{lemma}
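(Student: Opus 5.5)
The plan is to verify the three claims directly from the definitions, treating each parameter as a convex combination. The key observation is that each perturbed parameter is $(1-\varepsilon)$ times the original plus $\varepsilon$ times a fixed ``reference'' value. The reference vector is $(\alpha,\beta,\gamma,\delta,\phi,\kappa)=(1/2,1/2,0,1/2,0,1/2)$, which corresponds to the matrix
\[
A_{\mathrm{ref}}=\begin{pmatrix}1/2&1/2&0&0\\1/2&0&1/2&0\\0&1/2&0&1/2\\0&0&1/2&1/2\end{pmatrix}.
\]
The map from parameters to the entries of \eqref{eq:A} is affine, so $A_\varepsilon=(1-\varepsilon)A+\varepsilon A_{\mathrm{ref}}$ entrywise. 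I would check this entry by entry; for instance,
\[
1-\beta_\varepsilon-\gamma_\varepsilon=(1-\varepsilon)(1-\beta-\gamma)+\varepsilon/2,
\]
and similarly for $1-\alpha_\varepsilon$, $1-\delta_\varepsilon-\phi_\varepsilon$ and $1-\kappa_\varepsilon$.

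For (1), $A$ and $A_{\mathrm{ref}}$ are both stochastic, and stochastic matrices form a convex set. Hence $A_\varepsilon$ has nonnegative entries and row sums $1$. Equivalently, the constraints $0\le\cdot\le1$, $\beta_\varepsilon+\gamma_\varepsilon\le 1$ and $\delta_\varepsilon+\phi_\varepsilon\le1$ are preserved under convex combination.

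For (2), I would invoke Lemma \ref{lemma:irreducible}. Each super- and subdiagonal entry of $A_\varepsilon$ equals $(1-\varepsilon)\cdot(\text{a nonnegative entry of }A)+\varepsilon/2$, which is at least $\varepsilon/2>0$. Concretely, the six entries to check are $1-\alpha_\varepsilon$, $\beta_\varepsilon$, $1-\beta_\varepsilon-\gamma_\varepsilon$, $\delta_\varepsilon$, $1-\delta_\varepsilon-\phi_\varepsilon$ and $1-\kappa_\varepsilon$, each of which is at least $\varepsilon/2$. Irreducibility follows.

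For (3), $A_\varepsilon-A=\varepsilon(A_{\mathrm{ref}}-A)$. Every entry of this difference is bounded by $\varepsilon$ in absolute value, so it tends to $0$ as $\varepsilon\downarrow0$.

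The only mildly delicate step is the bookkeeping in (2): confirming that the combined off-diagonal entries such as $1-\beta_\varepsilon-\gamma_\varepsilon$ really pick up the $+\varepsilon/2$ term. This works only because $\gamma$ and $\phi$ are scaled without any additive shift, so the $\varepsilon/2$ contributed by $\beta_\varepsilon$ (respectively $\delta_\varepsilon$) is exactly compensated in the complementary entry.
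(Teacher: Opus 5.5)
Your proof is correct and follows essentially the same route as the paper: both prove irreducibility via Lemma~\ref{lemma:irreducible} by showing that each of the six super- and subdiagonal entries is at least $\varepsilon/2$. Your observation that $A_\varepsilon=(1-\varepsilon)A+\varepsilon A_{\mathrm{ref}}$ neatly gives stochasticity and convergence at once, whereas the paper checks constraints such as $\beta_\varepsilon+\gamma_\varepsilon\le 1-\varepsilon/2$ by hand.
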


\begin{proof}
Row-stochasticity follows by construction.
We check the constraints.

First, $0\le \alpha_\varepsilon,\beta_\varepsilon,\gamma_\varepsilon,
\delta_\varepsilon,\phi_\varepsilon,\kappa_\varepsilon\le 1$.
Next,
\[
\beta_\varepsilon+\gamma_\varepsilon
=(1-\varepsilon)(\beta+\gamma)+\varepsilon/2
\le (1-\varepsilon)+\varepsilon/2
=1-\varepsilon/2<1,
\]
so $1-\beta_\varepsilon-\gamma_\varepsilon\ge \varepsilon/2>0$.
Similarly,
\[
\delta_\varepsilon+\phi_\varepsilon
=(1-\varepsilon)(\delta+\phi)+\varepsilon/2
\le 1-\varepsilon/2<1,
\]
so $1-\delta_\epsilon-\phi_\epsilon\ge \varepsilon/2>0$.

We also have
$\beta_\varepsilon\ge \varepsilon/2>0$,
$\delta_\varepsilon\ge \varepsilon/2>0$,
$1-\kappa_\varepsilon
=1-(1-\varepsilon)\kappa-\varepsilon/2
\ge 1-(1-\varepsilon)-\varepsilon/2
= \varepsilon/2 >0$.
Finally,
\[
1-\alpha_\varepsilon
=1-(1-\varepsilon)\alpha-\varepsilon/2
\ge 1 - (1-\varepsilon) - \varepsilon/2 = \varepsilon/2 > 0.
\]
Thus $A_\varepsilon$ is irreducible according to Lemma \ref{lemma:irreducible}.
The convergence $A_\varepsilon\to A$ is immediate from the definitions.
\end{proof}

\begin{theorem}\label{thm:main}
For every matrix $A$ as in (\ref{eq:A}),
\[
\lambda_2(A)\ge 0.
\]
\end{theorem}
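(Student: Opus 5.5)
The plan is to combine Theorem~\ref{prop:atmost2neg} with the approximation of Lemma~\ref{lem:irreducible_approx} and pass to the limit using continuity of eigenvalues. There is one subtlety: a reducible $A$ need not be similar to a symmetric matrix. Its spectrum is real, though, since a reducible tridiagonal matrix is block upper or lower triangular with irreducible tridiagonal or $1\times1$ diagonal blocks, and each such block has real eigenvalues. Rather than relying on the ordering of real eigenvalues, I will argue directly with the multiset of eigenvalues.

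\textbf{Step 1.} Fix $A$ as in (\ref{eq:A}) and let $A_\varepsilon$ be the matrices of Lemma~\ref{lem:irreducible_approx}. Each $A_\varepsilon$ is irreducible, tridiagonal and stochastic. By Theorem~\ref{prop:atmost2neg}, $A_\varepsilon$ has at most two negative eigenvalues. Its eigenvalues are real, and $1$ is one of them, so in the ordering $\lambda_1(A_\varepsilon)=1\ge\lambda_2(A_\varepsilon)\ge\lambda_3(A_\varepsilon)\ge\lambda_4(A_\varepsilon)$ at least two eigenvalues are nonnegative. In particular $\lambda_2(A_\varepsilon)\ge 0$.

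\textbf{Step 2.} Take the limit. The characteristic polynomial $p_\varepsilon(t)=\det(tI-A_\varepsilon)$ has coefficients that are continuous in $\varepsilon$ and converge to those of $p(t)=\det(tI-A)$. Each $p_\varepsilon$ has four real roots, at least two of which lie in $[0,\infty)$. By continuity of polynomial roots as a multiset (for instance via Hurwitz/Rouch\'e, or simply by compactness, since all roots lie in $[-1,1]$), the roots of $p$ are limits of the roots of $p_\varepsilon$. Concretely, order the roots of $p_\varepsilon$ decreasingly as $\mu_1(\varepsilon)\ge\dots\ge\mu_4(\varepsilon)$ and pass to a subsequence along which each $\mu_i(\varepsilon)$ converges to some $\mu_i$. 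Then $\prod_i(t-\mu_i(\varepsilon))\to\prod_i(t-\mu_i)$, so the $\mu_i$ are exactly the roots of $p$ with multiplicity. Hence $A$ has real spectrum $\mu_1\ge\dots\ge\mu_4$ with $\mu_2=\lim\mu_2(\varepsilon)\ge 0$.

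\textbf{Step 3.} Identify $\mu_1=1$. Since $A$ is stochastic, $1$ is an eigenvalue, and every eigenvalue has modulus at most $1$, so $\mu_1=1=\lambda_1(A)$. Therefore $\lambda_2(A)=\mu_2\ge 0$.

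The main obstacle is the justification of the limiting step when $A$ is reducible, because $A$ is no longer symmetrizable and the continuity argument of Theorem~\ref{prop:atmost2neg} for symmetric matrices does not apply directly. The subsequence/compactness argument on the roots of the characteristic polynomial handles this cleanly. It uses only that all eigenvalues of the approximants are real and lie in $[-1,1]$, and it yields the realness of the spectrum of $A$ as a by-product.
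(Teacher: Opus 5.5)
Your proof is correct and follows the paper's route: approximate $A$ by the irreducible matrices $A_\varepsilon$ of Lemma~\ref{lem:irreducible_approx}, apply Theorem~\ref{prop:atmost2neg} to get $\lambda_2(A_\varepsilon)\ge 0$, and pass to the limit through continuity of the roots of the characteristic polynomial. The differences are minor but welcome. The paper argues by contradiction and tacitly assumes that a reducible $A$ has real spectrum. Your direct compactness argument on the ordered roots instead proves that realness as a by-product, and it also makes explicit the step from ``at most two negative eigenvalues'' to $\lambda_2\ge 0$.
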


\begin{proof}
Either $A$ is irreducible, and then the result follows from Theorem \ref{prop:atmost2neg}. Or $A$ is reducible, and then for each $\varepsilon\in(0,1)$, the matrix $A_\varepsilon$ from
Lemma~\ref{lem:irreducible_approx} is irreducible,
so
\[
\lambda_2(A_\varepsilon)\ge 0.
\]

Assume for contradiction that $\lambda_2(A)<0$.
Then $\lambda_2(A)$ is a real negative eigenvalue. Since we order eigenvalues
by value,
\[
\lambda_2(A)\ge \lambda_3(A)\ge \lambda_4(A)
\]
implies that $A$ has three (real) negative eigenvalues.

Eigenvalues are roots of the characteristic polynomial and the coefficients of
this polynomial depend continuously on the matrix entries.
Hence there exists $\varepsilon_0>0$ such that for all $\varepsilon<\varepsilon_0$,
the matrix $A_\varepsilon$ has three eigenvalues close to those
three negative eigenvalues of $A$, and has in particular three negative
eigenvalues. This would force $\lambda_2(A_\varepsilon)<0$, contradicting
$\lambda_2(A_\varepsilon)\ge 0$.

Therefore, we can conclude $\lambda_2(A)\ge 0$.
\end{proof}

\section{Conclusions and further research}\label{Conclusions and further research}

In this paper, we prove the conjecture of Ran and Teng stating that the second largest eigenvalue of an irreducible \(4\times4\) tridiagonal stochastic matrix is nonnegative. Using an approximation argument, we subsequently extended the result to all \(4\times4\) tridiagonal stochastic matrices, including reducible matrices. Hence, for every matrix of the form (\ref{eq:A}), the property
\[
\lambda_2(A)\ge 0
\]
holds.

A natural direction for further research is the investigation of higher-dimensional tridiagonal stochastic matrices. In particular, it would be interesting to determine whether analogous spectral restrictions hold for \(n\times n\) tridiagonal stochastic matrices with \(n\ge 5\), and whether similar techniques based on symmetrization and inertia can be extended to these cases.

\bibliographystyle{plain}
\bibliography{references}
\end{document}